\newtheorem{thm}{Theorem}
\newtheorem{lem}[thm]{Lemma}
\newtheorem{conjecture}[thm]{Conjecture}
\title{Tilted Sperner Families}
\author{Imre Leader\thanks{Department of Pure Mathematics and Mathematical Statistics, Centre for Mathematical Sciences, Cambridge CB3 0WB, United Kingdom. E-mail: I.Leader@dpmms.cam.ac.uk} \and Eoin Long\thanks{Department of Pure Mathematics and Mathematical Statistics, Centre for Mathematical Sciences, Cambridge CB3 0WB, United Kingdom. E-mail: E.P.Long@dpmms.cam.ac.uk. Research of the second author is supported by a Benefactor Scholarship from St. John's College, Cambridge.}}
\begin{document}

\maketitle

\begin{abstract}
Let $\cal A$ be a family of subsets of an $n$-set such that $\mathcal A$ does not contain distinct sets
$A$ and $B$ with $\vert A \backslash B \vert = 2 \vert B \backslash A \vert$. How large can $\cal A$
be? Our aim in this note is to determine the maximum size of such an $\mathcal A$. This
answers a question of Kalai. We also give some related results and
conjectures.
\end{abstract}


\section{Introduction}

A set system $\mathcal A\subseteq \mathcal {P}[n]=\mathcal {P}(\{1,\ldots ,n\})$ is said to be an \emph{antichain} or \emph{Sperner family} if $A\not\subset B$ for all distinct $A,B\in \mathcal A$. Sperner's theorem \cite{sper} says that any antichain $\mathcal A$ has size at most 
$\binom {n}{\lfloor n/2\rfloor }$. (See \cite{com} for general background.)

Kalai \cite{kal} noted that the antichain condition may be restated as: $\mathcal A$ does not
contain $A$ and $B$ such that, in the subcube of the $n$-cube spanned by $A$ and $B$,
they are the top and bottom points. He asked what happens if we `tilt' this
condition. For example, suppose that we instead forbid $A$,$B$ such that $A$ is
1/3 of the way up the subcube spanned by $A$ and $B$? Equivalently, $\mathcal A$ cannot
contain two sets $A$ and $B$ with $|A\backslash B|=2|B\backslash A|$.

An obvious example of such a system is any level set $[n]^{(i)}=\{ A\subset [n]:|A|=i\} $. Thus we may
certainly achieve size $\binom {n}{\lfloor n/2\rfloor }$. The system $[n]^{(\lfloor n/2\rfloor )}$ is not maximal, as
we may for example add to it all sets of size $\lfloor n/4\rfloor -1$ -- but that is a rather
small improvement. Kalai \cite{kal} asked if, as for Sperner families, it is still
true that our family $\mathcal A$ must have size $o(2^n)$.

Our aim in this note is to verify this. We show that the middle layer is
asymptotically best, in the sense that the maximum size of such a family
is $(1+o(1)) \binom {n}{\lfloor n/2\rfloor }$. We also find the exact extremal system, for $n$ even
and sufficiently large. We give similar results for any particular
`forbidden ratio' in the subcube spanned.

What happens if, instead of forbidding a particular ratio, we instead
forbid an absolute distance from the bottom point? For example, for
distance 1 this would correspond to the following: our set system $\mathcal A$ must not contain
sets $A$ and $B$ with $|A\backslash B|=1$.  How large can $\mathcal A$ be?

Here the situation is rather different, as for example one cannot take an
entire level. We give a construction that has size about $\frac {1}{n} \binom {n}{\lfloor n/2\rfloor }$,
which is about (a constant fraction of) $1/n^{3/2}$ of the whole cube. But we are not able to show that this
is optimal: the best upper bound that we are able to give is ${2^n}/{n}$.
However, if we strengthen the condition to $\mathcal A$ not having $A$ and $B$ with
$|A\backslash B| \leq 1$ then we are able to show that the greatest family has size $\frac{1}{n} \binom{n}{\lfloor n/2\rfloor }$, up to a multiplicative constant. 
\\


\section{Forbidding a fixed ratio}


In this section we consider the problem of finding the maximum size of a family $\mathcal A$ of subsets of $[n]$ which satisfies $p|A\backslash B|\neq q|B\backslash A|$ for all $A,B\in \mathcal A$ where $p:q$ is a fixed ratio. Initially we will focus on the first non-trivial case $1:2$ (note that $1:1$ is trivial as then the condition just forbids two sets of the same size in $\mathcal A$) and then at the end of the section we extend these results to any given ratio.

As mentioned in the Introduction, for the ratio $1:2$ we actually obtain the extremal family when $n$ is even and sufficiently large. This family, which we will denote by $\mathcal B_0$, is a union of level sets: $\mathcal B_0=\cup _{i\in I}[n]^{(i)}$. Here the set $I$ is defined as follows: $I=\{a_i:i\geq 0\}\cup \{b_i:i\geq 0\} $, where $a_0=b_0=\frac {n}{2}$ and $a_i$ and $b_i$ are defined inductively by taking $a_i=\lceil \frac {a_{i-1}}{2}\rceil -1$ and $b_i=\lfloor \frac {b_{i-1}+n}{2}\rfloor +1$ for all $i$. For example, if $n=2^k$ then $I=\{2^{k-1}\}\cup \{2^i-1:0\leq i\leq k-1\}\cup \{2^k-2^i+1:0\leq i\leq k-1\} $. Noting that for any sets $A$ and $B$ with either (i) $|A|=l$ where $l<\frac {n}{2}$ and $|B|>2l$ or (ii) $|A|=l$ where $l>\frac {n}{2}$ and $|B|<2l-n$ we have $|A\backslash B|\neq 2|B\backslash A|$, we see that $\mathcal B_0$ satisfies the required condition. Our main result is the following.


\begin{thm}
\label{main}
Suppose $\mathcal A$ is a set system on ground set $[n]$ such that $|A\backslash B|\neq 2|B\backslash A|$ for all distinct $A,B\in \mathcal A$. Then $|\mathcal A|\leq (1+o(1))\binom {n}{\lfloor n/2 \rfloor }$. Furthermore, if $n$ is even and sufficiently large then $|\mathcal A|\leq |\mathcal B_0|$, with equality 
if and only if $\mathcal A=\mathcal B_0$.
\end{thm}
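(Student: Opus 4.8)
The plan is to mimic the proof of Sperner's theorem via symmetric chain decompositions, but with ``chains'' adapted to the forbidden $1:2$ ratio. First I would recast the statement as a bound on the independence number of the conflict graph $G$ on $\mathcal P[n]$, where $A\sim B$ iff $|A\setminus B|=2|B\setminus A|$ (in either order), and reduce to the central band $R=\{k:|k-n/2|\le\sqrt{n\log n}\}$: since a Chernoff estimate gives $\sum_{k\notin R}\binom nk\le 2^n/n^2=o\!\left(\binom n{\lfloor n/2\rfloor}\right)$, the members of $\mathcal A$ outside $R$ are negligible and it suffices to show $\sum_{k\in R}|\mathcal A\cap[n]^{(k)}|\le(1+o(1))\binom n{\lfloor n/2\rfloor}$.

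The key device is the following family of cliques in $G$, playing the role of symmetric chains. Fix a set $D$, a nested sequence $R_1\subset R_2\subset\cdots$ of subsets of $D$ with $|R_i|=2i$, and a nested sequence $A_1\subset A_2\subset\cdots$ of subsets of $[n]\setminus D$ with $|A_i|=i$, and set $D_i=(D\setminus R_i)\cup A_i$ with $D_0=D$. For $i<j$ one computes $D_i\setminus D_j=R_j\setminus R_i$ and $D_j\setminus D_i=A_j\setminus A_i$, so $|D_i\setminus D_j|=2(j-i)=2|D_j\setminus D_i|$; hence \emph{every} pair $D_i,D_j$ is a forbidden pair, and $\{D_0,\dots,D_s\}$ is a clique of $G$ with exactly one member on each of the consecutive levels $|D|,|D|-1,\dots,|D|-s$. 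Since $\mathcal A$ contains no forbidden pair, it meets each such clique in at most one set. The plan is then to cover $R$ by cliques of this type of length $s\asymp\sqrt n$: as $R$ contains $\Theta(2^n)$ sets and each clique uses $\asymp\sqrt n$ of them, a near-uniform cover uses $\Theta(2^n/\sqrt n)=\Theta\!\left(\binom n{\lfloor n/2\rfloor}\right)$ cliques, yielding the required bound on $\sum_{k\in R}|\mathcal A\cap[n]^{(k)}|$.

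I expect the main obstacle to be producing such a cover with the correct constant. Rather than a literal partition, I would use a random tilted chain $\mathcal C$: choose $D$, $(R_i)$ and $(A_i)$ by a suitable random process, and set $p_k:=\mathbb P(A\in\mathcal C)$ for $|A|=k$, which by $S_n$-symmetry depends only on $k$. Writing $f_k=|\mathcal A\cap[n]^{(k)}|$, the clique property gives $\sum_k f_kp_k=\mathbb E|\mathcal A\cap\mathcal C|\le1$, so if the process can be arranged so that $p_k\ge(1-o(1))\binom n{\lfloor n/2\rfloor}^{-1}$ for every $k\in R$, then $\sum_{k\in R}f_k\le(1+o(1))\binom n{\lfloor n/2\rfloor}$. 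The delicate point is the \emph{uniformity} of these hitting probabilities across the whole band, especially near the ends of the chains, where the remove-two/add-one structure (unlike the single-step symmetric chains) makes it harder to keep the coverage level.

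For the exact statement I would pass to a symmetric version of the construction in which each chain is centred so as to pass through level $n/2$; then the bulk is covered by exactly $\binom n{\lfloor n/2\rfloor}$ such chains, each meeting $\mathcal A$ in at most one set, and equality forces $\mathcal A$ to contain precisely one set per chain. Tracing the equality conditions should show that, once the negligible tail is handled via the compatibility observation defining the index set $I$, the only way to attain the bound is to take all of level $n/2$ together with the further levels of $\mathcal B_0$, any deviation either creating a forbidden pair or strictly losing a set, which gives uniqueness. The hardest part here will be making the chain cover \emph{exact} rather than merely asymptotic and combining it with a stability argument that pins the extremal family down to $\mathcal B_0$ for $n$ even and sufficiently large.
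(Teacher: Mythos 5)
Your asymptotic bound is essentially the paper's own argument: your tilted cliques $D_i=(D\setminus R_i)\cup A_i$ are exactly the chains $C_i$ of the paper's Lemma \ref{inequality}, and your averaging identity $\sum_k f_kp_k=\mathbb E|\mathcal A\cap\mathcal C|\le 1$ with $p_k=1/\binom{n}{k}$ is precisely the resulting LYM-type inequality $\sum_j |\mathcal A_j|/\binom{n}{j}\le 1$. The one adjustment needed is that a random chain of length $s\asymp\sqrt n$ has $p_k=0$ outside a window of $s+1$ consecutive levels, so it cannot cover your band $R$ of width $2\sqrt{n\log n}$; you should simply take the maximal chain, running from level $l=\lfloor n/3\rfloor$ up to $2l$ (as the paper does), after which $p_k=1/\binom{n}{k}\ge 1/\binom{n}{\lfloor n/2\rfloor}$ holds exactly across the whole middle band and there is no delicate uniformity issue at the ends.

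The genuine gap is in the exact part. First, the ``exact cover of the bulk by $\binom{n}{\lfloor n/2\rfloor}$ centred tilted chains'' is asserted rather than constructed; there is no off-the-shelf analogue of the symmetric chain decomposition here, and since consecutive levels have different sizes any such partition must contain chains of varying lengths, so ``one set per chain'' does not even locate the chosen sets on particular levels. Second, even granted such a decomposition, meeting every chain exactly once does not force $\mathcal A$ to be a union of full levels, let alone the specific union $\mathcal B_0$. The paper instead treats the level profile $x_i=|\mathcal A_i|$ as a point of the polytope cut out by \emph{all} the inequalities of Lemma \ref{inequality} (every $l\le n/3$ and every $k\ge 2n/3$, not just the central one) and runs a perturbation/weight-transfer argument: it first forces $x_{n/2}=\binom{n}{n/2}$, using the estimate $\binom{n}{n/2}>\binom{n}{(n/2)+1}+\binom{n}{3n/8}$ --- which is exactly where ``$n$ sufficiently large'' enters, and the uniqueness claim genuinely fails for small even $n$ such as $n=4$ --- then the inequalities kill all levels $n/4\le j\le 3n/4$ with $j\ne n/2$, and a further weight transfer identifies the surviving levels as the set $I$ defining $\mathcal B_0$. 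Your proposal contains no substitute for this step, and without it neither the ``sufficiently large'' hypothesis nor the uniqueness of $\mathcal B_0$ is accounted for.
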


\noindent The main step in the proof of Theorem \ref{main} is given by the following lemma. The proof is a Katona-type (see \cite{katona}) averaging argument.


\begin{lem}
\label{inequality}
Let $\mathcal{A}$ be a set system on $[n]$ such that $|A\backslash B|\neq 2|B\backslash A|$ for all distinct $A,B\in \mathcal A$. Then 
\begin{equation*}
 \sum _{j=l}^{2l} \frac {|\mathcal {A}_j|}{\binom {n}{j}} \leq 1
\end{equation*}
for all $l\leq \frac {n}{3}$ and 
\begin{equation*}
\sum _{j=2k-n}^{k} \frac {|\mathcal {A}_j|}{\binom {n}{j}} \leq 1 
\end{equation*}
for all $k\geq \frac {2n}{3}$, where $\mathcal A_j=\mathcal A\cap [n]^{(j)}$.
\end{lem}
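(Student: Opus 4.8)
The plan is to prove the first inequality by a Katona-style averaging over a uniformly random ordering of the ground set, and then to deduce the second by complementation. Following the LYM paradigm, I would pick a uniformly random permutation $x_1,\dots,x_n$ of $[n]$ and extract from it one \emph{test set} $S_j$ of each size $j=l,l+1,\dots,2l$, engineered so that (a) any two of the $S_j$ form a forbidden pair, and (b) for each fixed $j$ the set $S_j$ is distributed uniformly over $[n]^{(j)}$. Granting (a) and (b), the hypothesis on $\mathcal A$ forces at most one $S_j$ to lie in $\mathcal A$, so if $X$ counts the indices $j$ with $S_j\in\mathcal A$ then $X\le 1$ pointwise, and taking expectations gives
\[
1\ge \mathbb E[X]=\sum_{j=l}^{2l}\Pr[S_j\in\mathcal A]=\sum_{j=l}^{2l}\sum_{A\in\mathcal A_j}\Pr[S_j=A]=\sum_{j=l}^{2l}\frac{|\mathcal A_j|}{\binom nj},
\]
where the final equality is exactly property (b). This is the claimed bound.

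The crux is the construction of the $S_j$, and this is where the ratio $1:2$ and the bound $l\le n/3$ enter. The forbidden relation between sizes $j<j'$ reads $|S_{j'}\setminus S_j|=2(j'-j)$ and $|S_j\setminus S_{j'}|=j'-j$; in particular, moving from level $j$ to level $j+1$ should \emph{delete one} element and \emph{add two}. I would realise this on the permutation by taking $S_j$ to be the block of elements occupying the window of $j$ consecutive positions $\{j-l+1,\dots,2j-l\}$. As $j$ grows by $1$ this window slides right by one (dropping its leftmost element) and extends by two on the right, which is precisely the delete-one/add-two move. The window for $j=2l$ occupies positions $l+1,\dots,3l$, so all the windows fit inside $\{1,\dots,n\}$ exactly when $3l\le n$, that is, when $l\le n/3$.

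With this definition, (a) and (b) should both fall out cleanly. For (b), the unordered set of elements occupying any fixed block of $j$ positions in a uniform random permutation is itself uniform over $[n]^{(j)}$, giving $\Pr[S_j=A]=1/\binom nj$. For (a), comparing the windows of $S_j$ and $S_{j'}$ for $j<j'$ (and using $j'\le 2l\le 2j$), I expect $S_j\setminus S_{j'}$ to be the $j'-j$ elements in positions $j-l+1,\dots,j'-l$ and $S_{j'}\setminus S_j$ to be the $2(j'-j)$ elements in positions $2j-l+1,\dots,2j'-l$, so that $|S_{j'}\setminus S_j|=2|S_j\setminus S_{j'}|$ and the pair is forbidden. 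The main obstacle is really just finding this sliding-window object: once one sees that the local ``add two, delete one'' move globalises to consecutive windows, and that $l\le n/3$ is exactly the room needed to house them, the verification of (a) and (b) reduces to a short position-counting check.

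Finally, I would obtain the second inequality by applying the first to the complemented family $\mathcal A^{c}=\{[n]\setminus A:A\in\mathcal A\}$. Complementation swaps $A\setminus B$ with $B\setminus A$, so $\mathcal A^{c}$ again avoids the forbidden ratio; it sends level $j$ to level $n-j$ and preserves $\binom nj$. Taking $l=n-k$, the constraint $l\le n/3$ becomes $k\ge 2n/3$, and the range $l\le j\le 2l$ for $\mathcal A^{c}$ corresponds to $2k-n\le j\le k$ for $\mathcal A$, which yields $\sum_{j=2k-n}^{k}|\mathcal A_j|/\binom nj\le 1$, as required.
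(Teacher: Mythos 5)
Your proof is correct and is essentially the paper's own argument: a Katona-type averaging over a random permutation using a chain of test sets, one per level from $l$ to $2l$, in which each step deletes one element and adds two, so that any two sets in the chain form a forbidden pair and each level-$j$ set is uniform on $[n]^{(j)}$. The only cosmetic difference is that you deduce the second inequality by complementation, whereas the paper notes that a symmetric construction works; both are fine.
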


\begin{proof}
We only prove the first inequality, as the proof of the second is identical. Pick a random ordering of $[n]$ which we denote by $(a_1,a_2,\ldots ,a_{\lceil \frac {2n}{3} \rceil}, b_1,\ldots ,b_{\lfloor \frac {n}{3} \rfloor})$. Given this ordering, let $C_i=\{a_j:j\in[2i]\}\cup \{b_k:k\in [i+1,l]\} $ and let $\mathcal{C}=\{C_i:i\in [0,l]\}$. Consider the random variable $X=|\mathcal {A}\cap \mathcal {C}|$. Since each set $B\in [n]^{(i)}$ is equally likely to be $C_{i-l}$ we have $\mathbb {P}[B\in \mathcal {C}]= \frac {1}{\binom {n}{i}}$. Thus by linearity of expectation we have

\begin{equation}
 \label{ref}
 \mathbb{E}(X)=\sum_{i=l}^{2l}\frac {|\mathcal A_i|}{\binom {n}{i}}
\end{equation}

On the other hand, given any $C_i, C_j$ with $i<j$ we have $|C_i\backslash C_j|=2|C_j\backslash C_i|$ and so $\mathcal{A}$ can contain at most one of these sets. This gives $\mathbb{E}(X)\leq 1$. Together with (\ref{ref}) this gives the claimed inequality

\begin{equation*}
 \sum_{i=l}^{2l}\frac {|A_i|}{\binom {n}{i}} \leq 1
\end{equation*}

\end{proof}


\noindent \emph{Proof of Theorem \ref{main}.} We first show $|\mathcal A| \leq (1+o(1))\binom {n}{\lfloor n/2 \rfloor}$. By standard estimates (See e.g. Appendix A of \cite{aands}) we have $|[n]^{(\leq \alpha n)} \cup [n]^{(\geq (1-\alpha)n)}| = o(\binom {n}{\lfloor n/2 \rfloor})$ for any fixed $\alpha \in [0,\frac {1}{2})$, so it suffices to show that $| \bigcup _{i={\frac {2n}{5}}}^{\frac {3n}{5}} \mathcal A_i|\leq \binom {n}{\frac {n}{2}}$. But this follows immediately from Lemma \ref{inequality} by taking $l=\lfloor \frac {n}{3} \rfloor $.

We now prove the extremal part of the claim in Theorem \ref{main}. We first show that the maximum of $f(x)=\sum _{i=0}^n x_i$ subject to the inequalities 
\begin{equation}
 \label{firstineq}
\sum _{j=l}^{2l} \frac {x_j}{\binom {n}{j}} \leq 1, \quad l\in \{ 0,1,\ldots ,\lfloor \frac {n}{3} \rfloor \}
\end{equation}
and 
\begin{equation}
 \label{secondineq}
\sum _{j=2k-n}^{k} \frac {x_j}{\binom {n}{j}} \leq 1, \quad k\in \{ \lceil \frac {2n}{3} \rceil ,\ldots ,n\}
\end{equation} 
from Lemma \ref{inequality} occurs when $x_{n/2}= \binom {n}{\frac {n}{2}}$. 
Indeed, suppose otherwise. At least one of these inequalities involving $x_{n/2}$ must occur with equality, as otherwise we can increase $x_{n/2}$ slightly, increase the value of $f(x)$ and still satisfy (\ref{firstineq}) and (\ref{secondineq}). 
Pick $j>\frac {n}{2}$ as small as possible such that $x_j>0$. Let 
$y_{n/2}=x_{n/2}+\epsilon \binom {n}{n/2}$, $y_j=x_j-\epsilon \binom {n}{j}$ and $y_i=x_i$ for all other $i$. As $f(y)>f(x)$ one of the (\ref{firstineq}) or (\ref{secondineq}) must fail. If $\epsilon$ is sufficiently small only the inequalities involving $y_{n/2}$ and not $y_j$ can be violated. Choose 
$k<n/2$ maximal such that $y_k>0$ and $y_k$ does not occur in any inequality involving $y_j$. Note that we must have $j-k\geq \frac {n}{4}$. Decrease $y_k$ by $\epsilon \binom {n}{k}$. Since the only increased variable $y_{n/2}$ always occurs with one of $y_j$ or $y_k$, it follows that $y=(y_0,\ldots ,y_n)$ satisfies (\ref{firstineq}) and (\ref{secondineq}). 

We claim that $f(y)>f(x)$. Indeed, we must have either $|j-\frac {n}{2}|\geq \frac {n}{8}$ or $|k-\frac{n}{2}|\geq \frac {n}{8}$. Without loss of generality assume that $|k-\frac {n}{2}|\geq \frac {n}{8}$. Then since 
$\binom {n}{n/2}> \binom {n}{(n/2)+1} + \binom {n}{3n/8}$ for sufficiently large $n$ we have 
\begin{equation*}
f(y)=f(x)+\epsilon \binom {n}{n/2}-\epsilon \binom {n}{j} - \epsilon \binom {n}{k}
>f(x)+\epsilon \binom {n}{n/2}-\epsilon \binom {n}{(n/2)+1} - \epsilon \binom {n}{3n/8}>f(x).
\end{equation*}
Therefore we must have $x_{n/2}=\binom {n}{n/2}$, as claimed.

Now, by the inequalities (\ref{firstineq}) and (\ref{secondineq})  we have 
$x_j=0$ for all $\frac {n}{4}\leq j\leq \frac {3n}{4}$ with $j\neq \frac {n}{2}$. From here it is easy to see by a weight transfer argument that $f(x)$ has a unique maximum when $x_i=\binom {n}{i}$ for $i\in I$ and $x_i=0$ otherwise. For a set system $\mathcal A$ these values of $x_i=|\mathcal A_i|$ can only be achieved if $\mathcal A=\mathcal B_0$, as claimed. \hspace{2cm} $\square$\\


\noindent We remark that the statement of Theorem \ref{main} does not hold for all even $n$, as can be seen for example by taking $n=4$ and $\mathcal A= \mathcal P[n]\backslash [n]^{(2)}$.


We now extend Theorem \ref{main} from the ratio $1:2$ to any given ratio $p:q$. Let $p:q$ be in its lowest terms and $p<q$. If $A\in [n]^{(i+a)}$ and $B\in [n]^{(i)}$ satisfy $p|A\backslash B|=q|B\backslash A|$ then we have $p(a+b)=q(b)$ where $b=|B\backslash A|$. But then $pa=(q-p)b$ and since $p$ and $q$ are coprime we must have that $(q-p)|a$. Therefore any family $\mathcal A=\bigcup _{i\in I}[n]^{(i)}$, where $I$ is an interval of length $q-p$, satisfies $p|A\backslash B|\neq q|B\backslash A|$ for all $A,B\in \mathcal A$. Taking $\lfloor \frac {n}{2}\rfloor \in I$ gives $|\mathcal A|=(q-p+o(1))\binom {n}{\lfloor n/2 \rfloor }$. Our next result shows that this is asymptotically best possible.

\begin{thm}
\label{givenratio}
Let $p,q\in \mathbb{N}$ be coprime with $p<q$. Let $\mathcal A$ be a set system on ground set $[n]$ such that $p|A\backslash B|\neq q|B\backslash A|$ for 
all distinct $A,B\in \mathcal A$. Then $|\mathcal A|\leq (q-p+o(1))\binom {n}{\lfloor n/2 \rfloor }$. 
\end{thm}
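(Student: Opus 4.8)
The plan is to mirror the proof of Theorem \ref{main}, replacing the single Katona-type chain by a family of $q-p$ chains, one for each residue class modulo $q-p$; the factor $q-p$ in the bound will arise precisely from counting these classes separately.

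First I would generalise Lemma \ref{inequality}. Starting from a random ordering of $[n]$, I would split the ground set into a block of $qm$ ``$a$''-elements, a block of $pm$ ``$b$''-elements, and a small block $F$ of $f$ ``fixed'' elements, where $f+(p+q)m$ exhausts $[n]$ up to an $O(1)$ rounding error. Setting
\[
C_i = F \cup \{a_1,\dots,a_{qi}\}\cup\{b_{pi+1},\dots,b_{pm}\}, \qquad i=0,1,\dots,m,
\]
one checks that $|C_i|=f+pm+(q-p)i$, and that for $i<j$ one has $|C_j\setminus C_i|=q(j-i)$ and $|C_i\setminus C_j|=p(j-i)$, so that $p|C_j\setminus C_i|=q|C_i\setminus C_j|$. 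Hence $C_i$ and $C_j$ form a forbidden pair, and $\mathcal A$ contains at most one member of the chain. Since each $C_i$ is a uniformly random $|C_i|$-subset, the same linearity-of-expectation argument as in Lemma \ref{inequality} yields
\[
\sum_{i=0}^{m}\frac{|\mathcal A_{f+pm+(q-p)i}|}{\binom{n}{f+pm+(q-p)i}}\le 1 ,
\]
an inequality over a single arithmetic progression of levels with common difference $q-p$.

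Next I would exploit the freedom in $f$. Choosing $f\in\{f_0,f_0+1,\dots,f_0+(q-p)-1\}$ shifts the starting level $f+pm$ through all $q-p$ residues modulo $q-p$, while moving the covered range of levels, which is approximately $\big[\tfrac{p}{p+q}n,\tfrac{q}{p+q}n\big]$, by only a constant. Thus for a ``bulk'' interval $M=\big[\tfrac{p}{p+q}n+c,\ \tfrac{q}{p+q}n-c\big]$ with $c$ a suitable constant, every one of the $q-p$ chains covers all of $M$ in its own residue class. Using $\binom{n}{j}\le\binom{n}{\lfloor n/2\rfloor}$ for each $j\in M$, each chain inequality gives $\sum_{j\in M,\ j\equiv r}|\mathcal A_j|\le\binom{n}{\lfloor n/2\rfloor}$, and summing over the $q-p$ residues $r$ yields $\sum_{j\in M}|\mathcal A_j|\le (q-p)\binom{n}{\lfloor n/2\rfloor}$.

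Finally, since $\tfrac{p}{p+q}<\tfrac12$ is a constant, $M$ is of the form $[\alpha n,(1-\alpha)n]$ with $\alpha<\tfrac12$ fixed, so the levels outside $M$ satisfy $\big|[n]^{(<\alpha n)}\cup[n]^{(>(1-\alpha)n)}\big|=o\!\big(\binom{n}{\lfloor n/2\rfloor}\big)$ by the same standard estimates used for Theorem \ref{main}. Adding this tail contribution gives $|\mathcal A|\le (q-p+o(1))\binom{n}{\lfloor n/2\rfloor}$, as required. The routine but fiddly point to get right is the bookkeeping in the third step: one must verify that the $O(1)$ rounding in $f+(p+q)m=n$ and the constant shifts between the $q-p$ chains can all be absorbed into the slack $c$, so that the chains jointly cover each bulk level exactly once, with no level double-counted or missed. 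I expect this boundary and rounding accounting, rather than any conceptual difficulty, to be the main thing requiring care.
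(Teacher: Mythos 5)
Your proposal is correct and follows essentially the same route as the paper: the paper's Lemma \ref{secondinequality} uses a random ordering $(a_1,\dots,a_{qm},b_1,\dots,b_{pm})$ with chains $C_i=\{a_j: j\in[qi+k']\}\cup\{b_j: j\in[pi+1,pm]\}$, where the offset $k'$ plays exactly the role of your fixed block $F$, giving one chain inequality per residue class modulo $q-p$ and then summing over the $q-p$ classes together with the standard tail estimate.
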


The following lemma performs an analogous role to that of Lemma \ref{inequality} in the proof of Theorem \ref{main}. 

\begin{lem}
\label{secondinequality}
Let $\mathcal{A}$ be a set system on $[n]$ such that $p|A\backslash B|\neq q|B\backslash A|$ for all distinct $A,B\in \mathcal A$. Then 
\begin{equation*}
 \sum _{j\in J_k} \frac {|\mathcal {A}_j|}{\binom {n}{j}} \leq 1
\end{equation*}
where $J_k=\{l:\lceil \frac {pn}{p+q}\rceil \leq l \leq \lfloor \frac {qn}{p+q}\rfloor, l\equiv k \pmod {(q-p)}\} $ for $0\leq k\leq q-p-1$.
\end{lem}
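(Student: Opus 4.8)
The plan is to mimic the averaging argument of Lemma~\ref{inequality}, but now building a chain whose consecutive gaps realize the forbidden ratio $p:q$ along a single fixed set of indices $J_k$. The key combinatorial observation is the one recorded just before Theorem~\ref{givenratio}: if $A \in [n]^{(i+a)}$ and $B \in [n]^{(i)}$ satisfy $p|A\backslash B| = q|B\backslash A|$, then $(q-p) \mid a$, i.e.\ the two sizes must be congruent modulo $q-p$. This is exactly why the index set $J_k$ is restricted to a single residue class $k \pmod{(q-p)}$: within one class the forbidden pairs can actually occur, and the sum over a class is what we can bound.

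First I would fix a residue class $k$ and enumerate the levels of $J_k$ as $l_0 < l_1 < \cdots < l_m$, where consecutive levels differ by $q-p$, all lying in the window $[\lceil \frac{pn}{p+q}\rceil, \lfloor \frac{qn}{p+q}\rfloor]$. For a set $A$ of size $l_s$ and a set $B$ of size $l_t$ with $s < t$ (so $|A| < |B|$), the relation $p|B\backslash A| = q|A\backslash B|$ (with the roles arranged so the \emph{larger} set is at the top of the subcube) should be achievable by a suitable nested chain. Concretely, I would build a random chain $\mathcal{C} = \{C_0 \subset C_1 \subset \cdots \subset C_m\}$ with $|C_s| = l_s$, where at each step from $C_s$ to $C_{s+1}$ we add elements and simultaneously the chain is arranged so that \emph{every} pair $C_s, C_t$ realizes the forbidden ratio $p|C_s \backslash C_t| = q|C_t \backslash C_s|$. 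The right geometry is to interleave two monotone scales — one set of elements entering as the size grows and one set leaving — tuned so that $|C_t \backslash C_s|$ and $|C_s \backslash C_t|$ stand in ratio $q:p$ for all $s<t$; this is the natural generalization of the $C_i = \{a_j : j \in [2i]\} \cup \{b_k : k \in [i+1,l]\}$ construction in Lemma~\ref{inequality}, where the $1:2$ ratio forced exactly two entering elements per one leaving element.

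With such a chain in hand, the finish is routine and parallels Lemma~\ref{inequality}: I take a uniformly random ordering of $[n]$, use it to specify the random chain $\mathcal{C}$ with the prescribed sizes $l_s \in J_k$, and set $X = |\mathcal{A} \cap \mathcal{C}|$. Each fixed set of size $l_s$ is equally likely to appear as $C_s$, so $\mathbb{P}[B \in \mathcal{C}] = 1/\binom{n}{l_s}$ and linearity of expectation gives $\mathbb{E}(X) = \sum_{j \in J_k} |\mathcal{A}_j|/\binom{n}{j}$. Since every pair $C_s, C_t$ with $s \neq t$ is a forbidden pair, $\mathcal{A}$ contains at most one member of $\mathcal{C}$, whence $X \leq 1$ always and $\mathbb{E}(X) \leq 1$, which is the claimed inequality.

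The main obstacle, and the only place requiring genuine care, is the explicit construction of the chain so that \emph{all} $\binom{m+1}{2}$ pairs simultaneously realize $p:q$ rather than merely consecutive ones. I expect this to come down to an arithmetic bookkeeping check: choosing the number of entering and leaving elements at each step proportional to $q$ and $p$ respectively and verifying that the window $[\lceil \frac{pn}{p+q}\rceil, \lfloor \frac{qn}{p+q}\rfloor]$ is wide enough and positioned correctly so that the required numbers of elements to add and remove are available (nonnegative and not exceeding $n$) at every level $l_s$. The endpoints $\frac{pn}{p+q}$ and $\frac{qn}{p+q}$ are precisely the sizes at which a set can sit at ratio $p:q$ relative to $\emptyset$ and $[n]$, which is what guarantees the construction does not run off the ends of the cube; confirming this containment for all intermediate pairs is the crux.
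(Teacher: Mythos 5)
Your proposal is correct and is essentially the paper's own argument: a random ordering of $[n]$, a chain whose $i$-th member gains $q$ ``entering'' elements and loses $p$ ``leaving'' elements per step (the paper takes $C_i=\{a_j: j\in[qi+k']\}\cup\{b_j: j\in[pi+1,pm]\}$ with $n=(p+q)m$ and an offset $k'$ to land in the residue class $k$), so that every pair realizes the ratio and $\mathcal A$ meets the chain at most once. One small caution: the sets are not nested as your notation $C_0\subset C_1\subset\cdots$ suggests --- your subsequent description of interleaved entering and leaving elements is the right picture, and the endpoint check you defer is exactly the routine verification the paper also leaves implicit.
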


\begin{proof}
We only sketch the proof, as it is very similar to the proof of Lemma \ref{inequality}. For convenience we assume $n=(p+q)m$ (this assumption is easily removed). Fix $k\in [0,q-p-1]$ and let $k'\equiv k-pm\pmod {(q-p)}$ where $k'\in [0,q-p-1]$. Pick a random ordering of $[n]$ which we denote by $(a_1,a_2,\ldots ,a_{qm}, b_1,\ldots ,b_{pm})$. Given this ordering let $C_i=\{a_j:j\in[qi+k']\}\cup \{b_j:j\in [pi+1,pm]\} $ and let $\mathcal{C}=\{C_i:i\in [0,m-1]\}$. (Here if $k'=0$ we additionally adjoin $C_m$ to $\mathcal C$.) By choice of $k'$, we have $|C_i|\in J_k$ for all $i\in [0,m-1]$.

Again for any $C_i$ and $C_j$ with $i<j$ we have $q|C_i\backslash C_j|=p|C_j\backslash C_i|$, which implies that $\mathcal A$ contains at most one element of $\mathcal C$. Using this the rest of the proof is as in Lemma \ref{inequality}.
\end{proof}

The proof of Theorem \ref{givenratio} is now identical to the proof of Theorem \ref{main} taking Lemma \ref{secondinequality} in place of Lemma \ref{inequality}.

For simplicity we have given in Lemma \ref{secondinequality} only the inequalities that we needed in order to prove Theorem \ref{givenratio}. Further inequalities involving smaller level sets analogous to those in Lemma \ref{inequality} can also be obtained in a similar fashion. While we have not done so here, we note that it is possible to use these inequalities to again find an exact extremal family for any given ratio $p:q$ as in Theorem \ref{main}, provided $q-p$ and $n$ have the opposite parity and $n$ is sufficiently large.


\section{Forbidding a fixed distance}

In this final section we consider how large a family $\mathcal A$ can be if for all $A,B\in \mathcal A$ we do not allow $A$ to have a constant distance from the bottom of the subcube formed with $B$. For `distance exactly 1' this would mean that we exclude $\vert A \backslash B\vert= 1$ for $A,B\in \mathcal A$. Here the following family $\mathcal A^*$ provides a lower bound: let $\mathcal A^*$ consist of all sets $A$ of size $\lfloor n/2\rfloor $ such that $\sum _{i\in A} i \equiv r \pmod {n}$, where $r\in \{0,\ldots ,n-1\}$ is chosen to maximise $|\mathcal A^*|$. Such a choice of $r$ gives $| \mathcal A^* |\geq {\frac {1}{n}} {\binom{n} {\lfloor n/2\rfloor }}$. Note that if we had $|A\backslash B|=1$ for some $A,B\in \mathcal A^*$ then, since $|A|=|B|$, we would also have $|B\backslash A|=1$. Letting $A\backslash B=\{i\}$ and $B\backslash A=\{j\}$ we then have $i-j\equiv 0 \pmod {n}$, giving $i=j$, a contradiction.


We suspect that this bound is best.
\begin{conjecture} 
\label{conject}
Let $\mathcal{A}\subset \mathcal{P}[n]$ be a family which satisfies $|A\backslash B|\neq 1$ for all $A,B\in \mathcal{A}$. Then $|\mathcal A|\leq (1+o(1))\frac {1}{n}\binom {n}{\lfloor n/2\rfloor }$.
 \end{conjecture}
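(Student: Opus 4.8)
The plan is to run a Katona-type averaging argument exactly as in Lemma \ref{inequality}, but with the tilted chains replaced by \emph{cliques} in the ``distance-one graph'' $G$ on $\mathcal P[n]$ in which $A\sim B$ precisely when $|A\backslash B|=1$ or $|B\backslash A|=1$. The hypothesis says exactly that $\mathcal A$ is an independent set of $G$, so any clique of $G$ meets $\mathcal A$ in at most one set; this is the ``$\mathbb E(X)\le 1$'' input one needs. First I would discard every level $[n]^{(i)}$ with $|i-n/2|\ge C\sqrt{n\log n}$: for $C$ large, standard tail estimates show their union has size $o(\frac1n\binom n{\lfloor n/2\rfloor})$, so it suffices to bound $\sum_i|\mathcal A_i|$ over the central band of $O(\sqrt{n\log n})$ levels.

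The key step is to find large cliques near the middle. A useful family is the following \emph{double star}: fix a set $D$ with $|D|=k-1$ and an element $w\notin D$, and take all level-$k$ sets $D\cup\{x\}$ together with all level-$(k+1)$ sets $D\cup\{w,z\}$. One checks that every pair among these sets has one of its two differences equal to $1$ (two sets on the same level differ by a single swap, while a level-$k$ set $D\cup\{x\}$ and a level-$(k+1)$ set $D\cup\{w,z\}$ are either nested with height one or else share exactly $k-1$ elements), so this is a clique, of size roughly $2(n-k)\sim n$ for $k\approx n/2$. Averaging over a uniformly random relabelling of $[n]$ then yields, for each central $k$,
\begin{equation*}
s_1\,\frac{|\mathcal A_k|}{\binom nk}+s_2\,\frac{|\mathcal A_{k+1}|}{\binom n{k+1}}\le 1,\qquad s_1\sim s_2\sim\tfrac n2,
\end{equation*}
and hence $|\mathcal A_k|+|\mathcal A_{k+1}|\le(2+o(1))\frac1n\binom n{\lfloor n/2\rfloor}$. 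Feeding these inequalities into the linear-programming argument used for Theorem \ref{main} gives $|\mathcal A|\le(2+o(1))\frac1n\binom n{\lfloor n/2\rfloor}$, which already has the conjectured order of magnitude.

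The hard part is removing the spurious factor $2$. This factor is not merely a wasteful estimate: each inequality above links only two \emph{consecutive} levels, and the linear program they define is still satisfied by putting all the mass on a single level at density $2/n$. The reason the cliques cannot be stretched across more levels is exactly that containments are allowed, since a level-$k$ set sitting inside a level-$(k+2)$ set has $|A\backslash B|=0\neq1$ and so is \emph{not} a forbidden pair; this blocks any clique through three or more levels of comparable width. Even the one-level subproblem already exhibits the difficulty: there the question is the independence number of the Johnson graph $J(n,\lfloor n/2\rfloor)$, for which the Hoffman/ratio bound also gives only $(2+o(1))\frac1n\binom n{\lfloor n/2\rfloor}$, a constant factor above the value $(1+o(1))\frac1n\binom n{\lfloor n/2\rfloor}$ achieved by $\mathcal A^*$.

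To close the gap I would try to exploit the algebraic structure of the extremal family $\mathcal A^*$, whose defining invariant is $\sum_{i\in A}i\pmod n$; the goal would be to replace the local two-level cliques by a single \emph{global} inequality, coming from a weighting or character sum, that simultaneously ties together all central levels and is tight only for one middle level. I expect this linking to be the main obstacle: one must both beat the ratio bound on a single Johnson level and rule out any gain from distributing mass across the central band, and the two difficulties are coupled precisely because the free distance-$0$ containments let a family drift between levels at no cost. Absent such a global inequality, the averaging method alone seems unable to drop below the constant $2$.
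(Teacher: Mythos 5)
First, a point of orientation: the statement you are proving is stated in the paper as a \emph{conjecture}, not a theorem. The paper offers no proof of it; the best upper bound it establishes is Theorem \ref{exact}, namely $|\mathcal A|\leq \frac{C}{n}2^n$, which is a factor of order $n^{1/2}$ weaker than the conjectured $(1+o(1))\frac{1}{n}\binom{n}{\lfloor n/2\rfloor}$. Your proposal, by your own admission in its final paragraph, also does not prove the conjecture, so there is no complete argument to compare against the (nonexistent) one in the paper.

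Beyond that, there is a concrete error in the partial result you do claim. Your double-star clique is correct (every pair in it is a forbidden pair), and the averaging over a random relabelling does give, for each central $k$, an inequality of the form $s_1\frac{|\mathcal A_k|}{\binom nk}+s_2\frac{|\mathcal A_{k+1}|}{\binom n{k+1}}\leq 1$ with $s_1,s_2\sim n/2$. But these inequalities do \emph{not} yield $|\mathcal A|\leq(2+o(1))\frac1n\binom n{\lfloor n/2\rfloor}$ via the linear-programming step. Writing $y_j=|\mathcal A_j|/\binom nj$, the constraints say only that $y_k+y_{k+1}\leq(2+o(1))/n$ for consecutive $k$; the point $y_j=2/n$ for every \emph{second} level $j$ in the central band (and $0$ elsewhere) is feasible, and its objective value is $\frac2n\sum_{j \mathrm{\ alternate}}\binom nj=\Theta(2^n/n)=\Theta\bigl(n^{-1/2}\binom n{\lfloor n/2\rfloor}\bigr)$. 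So the true optimum of your LP is of the same order as the bound of Theorem \ref{exact}, not $\frac2n\binom n{\lfloor n/2\rfloor}$. Your own observation that the LP ``is still satisfied by putting all the mass on a single level at density $2/n$'' understates the difficulty: mass can sit at density $2/n$ on half of all central levels simultaneously, because the constraints only couple adjacent levels. This is essentially the same $\sqrt n$-sized gap that separates Theorem \ref{exact} from the conjecture, and the paper reaches that weaker bound by a much shorter route (each $(i+1)$-set contains at most one member of $\mathcal A_i$, then double count). Your closing diagnosis --- that one needs a single global inequality tying all central levels together, in the spirit of Lemma \ref{inequality} but compatible with the free distance-$0$ containments --- is a fair description of why the problem is open, but it is a research plan, not a proof.
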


\noindent The following gives an upper bound that is a factor $n^{1/2}$ larger than this.


\begin{thm}
{
\label{exact}
Let $\mathcal{A}\subset \mathcal{P}[n]$ be a family such that $|A\backslash B|\neq 1$ for all $A,B\in \mathcal{A}$. Then there exists a constant $C$ independent of $n$ such that $|\mathcal{A}|\leq \frac{C}{n}2^n$.}
\end{thm}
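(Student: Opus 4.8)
The plan is to bound $\mathcal A$ level by level, reducing everything to a single-level statement and then summing. The key observation is that the forbidden configuration becomes clean when restricted to a fixed level: if $A,B\in\mathcal A$ have $|A|=|B|=k$, then $|A\backslash B|=|B\backslash A|$, so the condition $|A\backslash B|\neq 1$ says precisely that no two members of $\mathcal A_k=\mathcal A\cap[n]^{(k)}$ differ by a single-element swap. In other words $\mathcal A_k$ is an independent set in the Johnson graph $J(n,k)$, and it suffices to show $|\mathcal A_k|\leq \tfrac{2}{n}\binom{n}{k}$ for each $k$, since then $|\mathcal A|=\sum_k|\mathcal A_k|\leq \tfrac{2}{n}2^n+O(1)\leq \tfrac{C}{n}2^n$.

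To prove the single-level bound I would use a Katona-type averaging argument over the natural cliques of $J(n,k)$, exactly in the spirit of Lemma \ref{inequality}. For a $(k-1)$-set $C$ the \emph{up-star} $K_C=\{C\cup\{x\}:x\notin C\}$ is a clique of size $n-k+1$ (any two members differ by a single swap), so $|\mathcal A_k\cap K_C|\leq 1$. Double counting over all $\binom{n}{k-1}$ up-stars, and noting that each $k$-set lies in exactly $k$ of them, gives
\begin{equation*}
k\,|\mathcal A_k|=\sum_{|C|=k-1}|\mathcal A_k\cap K_C|\leq \binom{n}{k-1},
\end{equation*}
hence $|\mathcal A_k|\leq \binom{n}{k-1}/k=\binom{n}{k}/(n-k+1)$. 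Symmetrically, using the \emph{down-stars} $\{D\backslash\{x\}:x\in D\}$ over all $(k+1)$-sets $D$, one obtains $|\mathcal A_k|\leq \binom{n}{k}/(k+1)$. For each $k$ one of the denominators $n-k+1$ and $k+1$ exceeds $n/2$, so in every case $|\mathcal A_k|\leq \tfrac{2}{n}\binom{n}{k}$; the levels $k=0,n$ contribute at most $2$ in total. (The same per-level estimate also follows from the Hoffman ratio bound applied to $J(n,k)$, whose least eigenvalue is $-\min(k,n-k)$, which is a useful sanity check that the constant is of the right order.)

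I expect the routine steps (verifying the stars are cliques, the double-counting multiplicities, and the boundary levels) to be entirely straightforward. The genuine obstacle is not in executing this argument but in its inherent lossiness: summing a bound that is essentially tight \emph{per level} produces $\tfrac{2}{n}2^n$, which is a factor $\Theta(\sqrt n)$ above the conjectured truth $\tfrac1n\binom{n}{\lfloor n/2\rfloor}$. The reason is that this approach only controls same-size pairs and completely ignores the forbidden pairs $A,B$ of \emph{different} sizes with $|A\backslash B|=1$; these cross-level interactions are what should prevent many levels from being simultaneously dense, and capturing them is exactly the hard part that Conjecture \ref{conject} would require and that the present theorem does not attempt.
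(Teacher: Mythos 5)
Your proposal is correct and is essentially the paper's own argument: the paper's double count of pairs $(A,B)$ with $A\in\mathcal A_i$, $B\in[n]^{(i+1)}$, $A\subset B$ is exactly your down-star clique cover, using that each $(i+1)$-set contains at most one member of $\mathcal A_i$. Your only (harmless, slightly tidier) addition is the symmetric up-star bound, which lets you cover all levels $1\leq k\leq n-1$ uniformly and dispense with the separate $o(2^n/n)$ estimate for the tails $[n]^{(\leq n/3)}\cup[n]^{(\geq 2n/3)}$ that the paper uses.
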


\begin{proof}
 {
An easy estimate gives that the number of subsets of $\mathcal{A}$ in 
$[n]^{(\leq n/3)}\bigcup [n]^{(\geq 2n/3)}$ is at most 
$4\binom{n}{n/3}=o(\frac{2^n}{n})$. Therefore it suffices to show that $|\mathcal{A}_i|\leq \frac{C}{n}\binom{n}{i}$ for all $i\in [\frac{n}{3},\frac{2n}{3}]$. 

To see this, note that since $|A\backslash A'|\neq 1$ for all $A,A'\in \mathcal{A}$, each $B\in [n]^{(i+1)}$ contains at most one $A\in \mathcal{A}_i$. Double counting, we have 

\begin{equation*}
{
\begin{split}
\frac {n}{3} | {\mathcal {A}} _i| \leq (n-i)| {\mathcal {A} }_i| 	&= | \lbrace (A,B): A\in \mathcal {A}_i, B \in [n]^{(i+1)}, A\subset B\rbrace | \\
									& \leq \binom {n}{i+1} \leq 3\binom {n}{i}
\end{split}
}
\end{equation*}
as required.
}
\end{proof}


Our final result gives an upper bound on the size of a family $\mathcal A$ in which we forbid `distance at most 1' instead of `distance exactly 1', i.e. 
where we have $|A\backslash B|> 1$ for all $A,B\in \mathcal A$. Again, the family $\mathcal A^*$ constructed above gives a lower bound for this problem. In general, if we forbid `distance at most $k$' then it is easily seen that the following family $\mathcal A_k^*$ gives a lower bound of $\frac {1}{n^k}\binom {n}{\lfloor n/2\rfloor }$: supposing $n$ is prime, let $\mathcal A_k^*$ consist of all sets $A$ of $\lfloor n/2 \rfloor $ which satisfy $\sum _{i\in A}i^d\equiv 0\pmod {n}$ for all $1\leq d\leq k$. 

Our last result provides a upper bound which matches this up to a multiplicative constant. The proof is again a Katona-type argument. Here the condition $|A\backslash B|>k$ rather than $|A\backslash B|\neq k$ seems to be crucial.


\begin{thm}
\label{atmost}
Let $k\in \mathbb {N}$. Suppose $\mathcal {A}$ is a set system on $[n]$ such that $|A\backslash B|>k$ for all distinct $A,B \in \mathcal {A}$. Then $|\mathcal {A}|\leq \frac {(2^k-o(1))}{n^k}\binom {n}{\lfloor n/2 \rfloor}$.
\end{thm}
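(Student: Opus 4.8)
The plan is to prove this by a Katona-type averaging argument, directly parallel to Lemma~\ref{inequality} and Lemma~\ref{secondinequality} but with the forbidden ratio replaced by the forbidden distance. First I would dispose of the tails: by the standard estimates used in the proof of Theorem~\ref{main}, the levels outside $[n/3,2n/3]$ contribute only $o\bigl(n^{-k}\binom{n}{\lfloor n/2\rfloor}\bigr)$, so it suffices to control $\sum_{n/3\le i\le 2n/3}|\mathcal A_i|$.

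The heart of the argument is the construction of the right random test family. I would pick a uniformly random ordering $(e_1,\dots,e_n)$ of $[n]$, write $F_j=\{e_1,\dots,e_j\}$ for the associated flag, and let $\mathcal C$ consist of all sets $F_{j}\cup S$ with $S\subseteq[n]\setminus F_{j}$ and $|S|=k$, taken over all $j$; thus $\mathcal C$ is a ``thickened chain'' in which, at level $i=j+k$, one attaches any $k$ further elements to the flag-prefix of size $i-k$. The key point is that any two members of $\mathcal C$ form a forbidden pair: if $C=F_{j}\cup S$ and $C'=F_{j'}\cup S'$ with $j\le j'$, then $F_{j}\subseteq F_{j'}\subseteq C'$, so the only elements of $C$ that can lie outside $C'$ are those of $S$, whence $|C\setminus C'|\le k$. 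Here it is essential that every one of the distances $0,1,\dots,k$ is forbidden — this is precisely where the hypothesis $|A\setminus B|>k$, rather than $|A\setminus B|\ne k$, is used — so that $C$ and $C'$ cannot both lie in $\mathcal A$. Hence $|\mathcal A\cap\mathcal C|\le 1$.

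Now I would run the averaging. A fixed $A$ with $|A|=i$ lies in $\mathcal C$ precisely when $F_{i-k}\subseteq A$, an event of probability $\binom{i}{k}/\binom{n}{i-k}=\binom{n-i+k}{k}/\binom{n}{i}$, so linearity of expectation together with $\mathbb E|\mathcal A\cap\mathcal C|\le1$ gives the weighted LYM-type inequality
\begin{equation*}
\sum_i |\mathcal A_i|\,\frac{\binom{n-i+k}{k}}{\binom{n}{i}}\le 1 .
\end{equation*}
Writing $|\mathcal A|=\sum_i|\mathcal A_i|$ and pulling the weights out, this yields $|\mathcal A|\le\max_i \binom{n}{i}/\binom{n-i+k}{k}$. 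Since the ratio $\binom{n}{i}/\binom{n-i+k}{k}$ is maximised near $i=n/2$, the weighting concentrates the mass at the middle level and produces a bound of the correct order $n^{-k}\binom{n}{\lfloor n/2\rfloor}$; there is no need to invoke Sperner's theorem separately, as the concentration is built into the weights (exactly as for $k=1$, where the weight is $n-i+1$).

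The step I expect to be the main obstacle is obtaining the \emph{sharp} constant $2^k$. The thickened chain above is a one-sided structure, and substituting $\binom{n-i+k}{k}\sim (n/2)^k/k!$ at $i=n/2$ gives a constant of order $k!\,2^k$ rather than $2^k$; the surplus factor $k!$ arises because, within a single level, the members of $\mathcal C$ form a diameter-bounded family whose size is capped by a Frankl-type bound, and this cap limits the probability $\mathbb P[A\in\mathcal C]$ at the central level. To recover the true constant one must exploit the hypothesis in \emph{both} directions at the middle level — forbidding not merely $|A\setminus B|\le k$ but also $|B\setminus A|\le k$ — for instance by balancing the test family against a structure confined to a window of levels $i-k,\dots,i+k$, inside which two sets can never simultaneously have more than $k$ private elements on each side. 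Making this refinement quantitative, so that the central-level estimate becomes tight rather than off by the factor $k!$, is the crux of the proof.
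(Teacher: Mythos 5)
Your averaging argument is correct as far as it goes, and it takes a genuinely different route from the paper. The paper passes to the $k$-shadow $\partial^{(k)}\mathcal{A}=\{A\setminus C:A\in\mathcal{A},\ C\subseteq A,\ |C|=k\}$, observes that the hypothesis forces distinct members of $\mathcal{A}$ to have disjoint $k$-shadows and forces $\partial^{(k)}\mathcal{A}$ to be an antichain, and then invokes Sperner's theorem; you instead fold everything into a single Katona-type argument with a thickened chain, arriving at the LYM-type inequality $\sum_i|\mathcal{A}_i|\binom{n-i+k}{k}/\binom{n}{i}\le 1$. Your key claims check out: any two distinct members of your $\mathcal{C}$ are at distance at most $k$ on one side, so $|\mathcal{A}\cap\mathcal{C}|\le 1$, and $\mathbb{P}[A\in\mathcal{C}]=\binom{i}{k}/\binom{n}{i-k}$ for $|A|=i$. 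Moreover $\binom{n}{i}/\binom{n-i+k}{k}$ is unimodal with its maximum within $O(k)$ of $n/2$ (the ratio of consecutive values is $(n-i+k)/(i+1)$), so you do not even need the tail estimate: the inequality alone gives $|\mathcal{A}|\le (k!\,2^k+o(1))\,n^{-k}\binom{n}{\lfloor n/2\rfloor}$. What your route buys is a self-contained one-step argument with a genuine per-level weighted inequality; the paper's shadow-plus-Sperner argument is shorter to state, but at the middle level its weight $\binom{i}{k}$ is asymptotically the same as yours, so the two methods are equally strong.

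The gap you flag --- the factor $k!$ between your constant $k!\,2^k$ and the stated $2^k$ --- is real in the sense that your proposal does not prove the theorem as stated, and your closing paragraph only gestures at a fix (the ``two-sided window'' and the Frankl-type cap) without supplying one. However, you have not missed an idea that the paper possesses: the paper's proof rests on the identity $|\partial^{(k)}\mathcal{A}|=\sum_i (i)_k|\mathcal{A}_i|$ with $(i)_k$ the falling factorial, whereas a set $A$ of size $i$ has only $\binom{i}{k}=(i)_k/k!$ distinct $k$-shadows, so the shadow is overcounted by a factor $k!$. With that count corrected, the paper's argument also yields only $(k!\,2^k+o(1))\,n^{-k}\binom{n}{\lfloor n/2\rfloor}$; the two agree at $k=1$, which is why the paper's closing remark about a factor of $2$ is unaffected. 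So your instinct that the constant $2^k$ is the genuinely hard part is sound: for $k\ge 2$ neither your sketch nor the paper's argument as written actually achieves it, and you should regard your proposal as matching the substance of the paper's proof up to that (shared) deficiency.
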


\begin{proof}
{
Consider the family $\partial ^{(k)} \mathcal A$, the $k$-shadow of $\mathcal A$, where 
\begin{equation*}
{
\partial ^{(k)}\mathcal{A} = \{B\in \mathcal{P}[n]: B=A\backslash C 
\mbox{ for some } A\in \mathcal {A} \mbox{ and }C\subset A \mbox { with } |C|=k\}.
}
\end{equation*}

Since $\mathcal{A}$ does not contain $A,B$ with $|A\backslash B|\leq k$, every element of $\partial ^{(k)}\mathcal{A}$ is contained in at most one element of $\mathcal{A}$. Therefore we have

\begin{equation}
{
\label{firstref}
|\partial ^{(k)}\mathcal{A}|=\sum_{i=0}^n (i)_k|\mathcal{A}_i|
}
\end{equation} 
where $i_k=i(i-1)\cdots (i-k+1)$. Now, since $\mathcal{A}$ does not contain $A,B$ with $|A\backslash B|\leq k$, it follows that $\partial ^{(k)}\mathcal{A}$ is an antichain, and so by Sperner's theorem we have 
\begin{equation}
 {
\label{secondref}
|\partial ^{(k)}\mathcal{A}| \leq \binom{n}{\lfloor n/2 \rfloor}
}
\end{equation}
Finally, an estimate of the sum of binomial coefficients (Appendix A of \cite{aands}) gives 
\begin{equation}
 {
\label{thirdref}
\sum_{i=0}^{\frac{n}{2}-n^{2/3}}|\mathcal{A}_i| \leq \sum_{i=0}^{\frac{n}{2}-n^{2/3}} \binom{n}{i} \leq e^{-n^{1/3}}2^n.
}
\end{equation}
Combining (\ref{firstref}), (\ref{secondref}) and (\ref{thirdref}) we obtain
\begin{equation*}
{
\begin{split}
\binom {n}{\lfloor n/2 \rfloor}  &\geq \sum_{i=0}^{\frac{n}{2}-n^{2/3}} (i)_k |\mathcal {A}_i| + \sum_{i=\frac{n}{2}-n^{2/3}}^n (i)_k|\mathcal {A}_i| \\ 
			  &\geq  \sum_{i=0}^{\frac{n}{2}-n^{2/3}} (\frac {n}{2}-n^{2/3})_k|\mathcal {A}_i| - (\frac {n}{2}-n^{2/3})_ke^{-n^{1/3}}2^n + \sum_{i={\frac{n}{2}-n^{2/3}}}^n (\frac{n}{2}-n^{2/3})_k|\mathcal {A}_i| \\
			  &=  (\frac {n}{2}- o(n))^k|\mathcal {A}| - o( \binom {n}{\lfloor n/2 \rfloor} )
\end{split}
}
\end{equation*}
which gives the desired result. 
}
\end{proof}


Taking $k=1$ in Theorem \ref{atmost} we obtain an upper bound which differs by a factor of 2 from the lower bound given by the family $\mathcal A^*$. It would be interesting to close this gap.


\end{document}